\newtheorem{Theorem}{Theorem}[section]
\newtheorem{Definition}[Theorem]{Definition}
\newtheorem{Proposition}[Theorem]{Proposition}
\newtheorem{Lemma}[Theorem]{Lemma}
\newtheorem{Corollary}[Theorem]{Corollary}
\newtheorem{Remark}[Theorem]{Remark}
\newcommand{\N}{\mathbb N}
\newcommand{\RR}{{{\rm I} \kern -.15em {\rm R} }}
\newcommand{\C}{{{\rm l} \kern -.42em {\rm C} }}
\newcommand{\nat}{{{\rm I} \kern -.15em {\rm N} }}
\newcommand{\be}{\begin{equation}}
\newcommand{\ee}{\end{equation}}
\newcommand{\beq}{\begin{eqnarray}}
\newcommand{\eeq}{\end{eqnarray}}
\newcommand{\beqs}{\begin{eqnarray*}}
\newcommand{\eeqs}{\end{eqnarray*}}
\newcommand{\bt}{\begin{Theorem}}
\newcommand{\et}{\end{Theorem}}
\newcommand{\br}{\begin{Remark}}
\newcommand{\er}{\end{Remark}}
\newcommand{\bc}{\begin{Corollary}}
\newcommand{\ec}{\end{Corollary}}
\newcommand{\bl}{\begin{Lemma}}
\newcommand{\el}{\end{Lemma}}
\newcommand{\bd}{\begin{definition}}
\newcommand{\ed}{\end{definition}}
\renewcommand{\geq}{\geqslant}
\renewcommand{\leq}{\leqslant}
\title{Flocking estimates for the Cucker-Smale model with\\ time lag and hierarchical
leadership}
\author{
Cristina Pignotti\footnote{Dipartimento di Ingegneria e Scienze dell'Informazione e Matematica, Universit\`{a} di L'Aquila, Via Vetoio, Loc. Coppito, 67010 L'Aquila Italy (\texttt{pignotti@univaq.it}).}
\and Irene Reche Vallejo\footnote{MathMods Program, Università di L'Aquila, Via Vetoio, Loc. Coppito, 67010, L'Aquila, Italy (\texttt{irenereche92@gmail.com}).}
}
\date{}
\begin{document}

\textwidth=160 mm

\textheight=225mm

\parindent=8mm

\frenchspacing

\maketitle

\begin{abstract}
We analyze the Cucker-Smale model under hierarchical leadership in presence of a time delay. By using a Lyapunov functional approach and some induction arguments we will prove convergence to consensus for every positive delay $\tau .$ We also prove a flocking estimate in the case of a free-will leader. These results seem to point out the advantage of a hierarchical structure in order to contrast time delay effects that frequently appear in real situations.
\end{abstract}

\vspace{5 mm}

\def\qed{\hbox{\hskip 6pt\vrule width6pt
height7pt
depth1pt  \hskip1pt}\bigskip}



\section{Introduction}
\label{pbform}

\setcounter{equation}{0}
In recent years the study of collective behavior of autonomous agents has attracted an increasing interest in several scientific disciplines, e.g. ecology, biology, social sciences, economics, robotics (see
\cite{Axe, Aydogdu, Bullo, Camazine, CFTV, Couzin, Hel, MY, Parrish, Perea, Toner, TB}).
The Cucker-Smale model has been proposed and studied in \cite{CS1, CS2} as a model for flocking, namely for phenomena where autonomous agents reach a consensus based on limited environmental information.
Let us consider $N\in \N$ agents and let $(x_i(t), v_i(t))\in\RR^{2d},$ $i=1,\dots, N,$ be their phase-space coordinates. As usual $x_i(t)$ denotes the position of the $i^{\textrm{th}}$ agent and $v_i(t)$ the velocity.
The finite-dimensional Cucker-Smale model is, for $t>0,$
\begin{equation}\label{standard}
\begin{split}
\dot x_i(t)&= v_i(t),\\
\dot v_i(t)&=\sum_{j=1}^N a_{ij}(t)(v_j(t)-v_i(t)),\qquad i=1,\dots, N,
\end{split}
\end{equation}
where  the communication rates $a_{ij}(t)$ are of the form
\begin{equation}\label{potential}
a_{ij}(t)=\psi (\vert x_i(t)-x_j(t)\vert )\,,
\end{equation}
for a suitable positive non-increasing potential function
$\psi: [0, +\infty)\rightarrow (0, +\infty )\,.$

\noindent We define the diameters in space and velocity,
\begin{equation}\label{XV}
X(t):=\displaystyle { \max_{i,j}\vert x_i(t)-x_j(t)\vert}
\quad\mbox {\rm and}\quad
\displaystyle {V(t):= \max_{i,j}\vert v_i(t)-v_j(t)\vert}\,.
\end{equation}
\begin{Definition}{\rm
We say that a solution of (\ref{standard}) converges to consensus (or flocking) if
\begin{equation}\label{cons}
\sup_{t>0} X(t)<+\infty\quad\quad\mbox {\rm and}\quad \quad \lim_{t\rightarrow +\infty} V(t)=0\,.
\end{equation}
}\end{Definition}

The potential initially considered by Cucker and Smale in \cite{CS1, CS2} is $\psi(s)=\frac H
{(\sigma +s^2)^\beta }$ with $H, \sigma >0$ and $\beta\geq 0$. They proved that there  is
unconditional convergence to flocking whenever
 $\beta <1/2$.
If $\beta\geq 1/2$, they  obtained convergence to flocking under appropriate assumptions on the values of the initial variances on positions and speeds. Actually, unconditional flocking has been proved also in the case $\beta=\frac 12$  (see e.g. \cite{HL}).

The extension of the flocking result to the case of non symmetric communication rates has been proposed by Motsch and Tadmor \cite{MT}. Several other variants and generalization have been proposed including more general potentials, cone-vision constraints, leadership (see e.g. \cite{Couzin, CuckerDong, HaSlemrod, Mech, MT_SIREV, Shen, Vicsek, Yates}), stochastic terms (\cite{CuckerMordecki, Delay2, HaLee}), pedestrian crowds (see \cite{Cristiani, Lemercier}), infinite-dimensional kinetic models  (see \cite{Albi, Bellomo, canuto, carfor, degond, HT, Toscani}), topological models (\cite{BD, H}), control models
(see \cite{Borzi, Caponigro1, Caponigro2, PRT, Wongkaew}).

In this paper we consider the so called Cucker-Smale system with hierarchical leadership introduced by
Shen \cite{Shen}. In Shen's model
the agents are ordered in a specific way, depending on which other agents they are leaders of or led by. Indeed, it may happen in real situations, e.g. in animals groups, that some agents are more  influential than the others.
It is also natural to assume that  information from other agents is received after a certain time delay or that every agent needs a time to elaborate it.

Then, here, we are interested in the asymptotic analysis of the Cucker-Smale model with hierarchical leadership in presence of time delay effects. In particular, we assume that the agent $i,$ at time $t,$  changes its velocity  taking into account the information from any other agent $j$ at a previous time $t-\tau,$ i.e. $a_{ij}(t-\tau) v_j(t-\tau ),$ for a fixed positive time delay $\tau .$ This is of course a simplified model. Indeed, we have to mention that, physically, the time delay for transmission should depend on the distance between the agents. On the other hand, our analysis also holds, without substantial modifications, if the time delay is a bounded positive function $\tau (t)$ of the time variable $t$ (cfr. \cite{PT}).

For other extensions of Shen's results we refer to \cite{Da, Li, LiY, LX, LHX}.

\noindent
Before introducing our model we recall some definitions from \cite{Shen}.

\begin{Definition}
The \textbf{leader set} $\mathcal{L}(i)$ of an agent $i$ in a flock $\{1, 2,\dots, N\}$ is the subgroup of agents that directly influence agent $i$, i.e. $\mathcal{L}(i)=\{j\;|\;a_{ij}>0\}$.
\end{Definition}

The Cucker-Smale system considered by Shen is, for all $i=1, \dots, N$ and $t>0$,

\begin{equation}\label{CSShen}
\begin{array}{l}
\displaystyle{
\frac{d x_i}{dt}={v_i},}\\
\displaystyle{
\frac{d {v_i}}{dt}=\sum_{j\in\mathcal{L}(i)}a_{ij}(t)({v_j}-{v_i}).}
\end{array}
\end{equation}

\begin{Definition}
A flock $\{1, \dots, N\}$ is an \textbf{HL-flock}, namely a flock under hierarchical leadership, if the agents can be ordered in such a way that:
\begin{enumerate}
	\item if $a_{ij}\neq0$ then $j<i$, and
	\item for all $i>1$, $\mathcal{L}(i)\neq \varnothing$.
\end{enumerate}
\end{Definition}

Then, if $\{1, \dots, N\}$ is an HL-flock, $j\ge i$ implies $a_{ij}=0.$ Instead, if  $j<i,$ it results $a_{ij}>0$ if agent $j$ is in the leader set, $\mathcal{L}(i),$ of agent $i,$ otherwise it is $a_{ij}=0.$
In particular, $\mathcal{L}(1)=\varnothing ,$ namely the agent $1$ is the ultimate leader.

In \cite{Shen},
the interaction potential was defined as in the original work by Cucker and Smale, with $\sigma =1,$  namely

\begin{equation}\label{potShen}
a_{ij}(t)=\psi(|x_i-{x_j}|)=\frac{H}{(1+|{x_i}-{x_j}|^2)^\beta},\quad j\in \mathcal{L}(i),
\end{equation}
with $H$ and $\beta$ positive constants, and
convergence to consensus was  proved when $\beta <\frac 1 2.$ The flocking result has then been  extended to the case $\beta=\frac 1 2$ (see \cite{RX}).

We consider here a variant of Shen's model including a positive time delay $\tau,$ namely our system is the  following

\begin{equation}\label{CSShendelay}
\begin{array}{l}
\displaystyle{\frac{d {x_i}}{dt}(t)={v_i}(t),}\\
\displaystyle{
\frac{d {v_i}}{dt}(t)=\sum_{j\in\mathcal{L}(i)}a_{ij}(t-\tau )({v_j}(t-\tau)-{v_i}(t)),}
\end{array}
\end{equation}
for all $i=1, \dots, N$ and $t>0$, where

\begin{equation}\label{aggiunto}
a_{ij}(t)=\psi(|x_i(t)-{x_j}(t)|), \quad j\in \mathcal{L}(i),
\end{equation}
for some
nonnegative, non-increasing, continuous interaction function $\psi.$

As usual, since we deal with a delay model, the initial data are given for $s\in[-\tau ,0]$,

\begin{equation}\label{IC}
\begin{array}{l}
{x_i}(s)=x_i^0(s),\\
{v_i}(s)=v_i^0(s),
\end{array}
\end{equation}
for some continuous functions $x_i^0$ and $v_i^0,$ $i=1,\dots, N.$

We will prove a flocking result when $\psi$ has divergent tail, namely if
$\int_a^{+\infty} \psi(s) ds =+\infty\,,$
for some $a>0.$
Then, on the one hand, in the undelayed case, i.e.  $\tau = 0,$ we extend the result of Shen by considering a general potential of divergent tail. On the other hand, we prove flocking results in presence of time delay. Note that we do not require any  smallness conditions on the size of the time delay.
In particular, the model with hierarchical leadership seems more robust, if compared with the standard Cucker-Smale model, against time delay effects.
Indeed, delayed versions of the standard Cucker-Smale model have been recently considered (see \cite{Delay1, Delay2, Choi, PT}) and flocking results have been proved but only for particular potentials (\cite{Delay1, Choi} ) or under appropriate structural assumptions (\cite{Delay2, PT}). This fact suggests that the presence of leaders  is more efficient in  contrasting the time delay effects which naturally  appear in dynamics describing the motion of group of birds, animals or other agents.

The paper is organized as follows.
In section \ref{sect2} we give some preliminary properties of system (\ref{CSShendelay}), in particular we prove the positivity property for solutions to the scalar model associated  and, using this, the boundedness property for the velocities of solutions to (\ref{CSShendelay}).
In section \ref{sect3} we prove our main theorem, namely the flocking result for the system (\ref{CSShendelay}).
Finally, in section \ref{sect4} we consider the model under hierarchical leadership and a free--will leader and we prove convergence to consensus under some growth assumption on the acceleration of the free--will leader.

\section{Preliminary results} \label{sect2}
\setcounter{equation}{0}

Here, some general properties of the Cucker-Smale model (\ref{CSShendelay}) are established, which will be useful to prove the main result regarding the emergence of flocking behaviour.

We will need to define the $m$-th level leaders sets.

\begin{Definition}
 For each agent $i=1,\dots, N,$
we define
\begin{enumerate}
\item the {\bf m-th level leaders set}  of $i$, as

$$\mathcal{L}^0(i)=\{i\},\;\; \mathcal{L}^1(i)=\mathcal{L}(i),\;\; \mathcal{L}^m(i)=\bigcup_{j\in \mathcal{L}^{m-1}(i)} \mathcal{L}(j),\ \ m\in\N;$$

\item the set of
all leaders of $i$, direct or indirect, as

$$[\mathcal{L}](i)=\mathcal{L}^0(i)\;\cup\; \mathcal{L}^1(i)\;\cup\; \dots$$
\end{enumerate}
\end{Definition}
In particular, by definition of an HL-flock, $[\mathcal{L}](1)= {\mathcal{L}}^0(1)=\{ 1\}.$

\begin{Proposition}\label{positivityTh}
Let $({x_i},{v_i}), \ i=1, \dots, N,$  be a solution of the Cucker--Smale system under hierarchical leadership $(\ref{CSShendelay})$.
Let
us  consider the following system of ordinary scalar differential equations
\begin{equation}\label{positivitywithdelay}
	\begin{array}{l}
	\displaystyle{
\frac{d\eta_i}{dt}(t)=\sum_{j\in\mathcal{L}(i)}a_{ij}(t-\tau)(\eta_j(t-\tau)-\eta_i(t)), \;\;\; i=1,\dots,N,\; t>0,}\\
\displaystyle{
\eta_i(s)=\eta_i^0(s), \quad \;s\in[-\tau ,0],}
\end{array}
\end{equation}
where $\eta_i^0(\cdot),$ $i=1,\dots, N,$ are continuous functions and $a_{ij}(t),$ for $i=1, \dots, N$ and $j\in \mathcal{L}(i),$ are defined as in $(\ref{aggiunto}).$
If $\eta_i^0(s)\geq0$ for all $i=1, \dots, N$, and all $s\in[-\tau ,0]$, then $\eta_i(t)\geq0$ for all $i$ and $t>0$.
\end{Proposition}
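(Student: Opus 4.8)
The plan is to exploit the triangular structure produced by hierarchical leadership and to argue by induction on the agent index $i$, reducing the claim at each step to a positivity statement for a single scalar linear ODE with nonnegative forcing.

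First, recall that in an HL-flock $j\in\mathcal{L}(i)$ forces $j<i$, so that in (\ref{positivitywithdelay}) the equation for $\eta_i$ involves only $\eta_i$ itself at time $t$ and the delayed functions $\eta_j(\cdot-\tau)$ with $j<i$; in particular the system is triangular and can be integrated component by component (which, incidentally, also yields existence and uniqueness of the solution). For $i=1$ we have $\mathcal{L}(1)=\varnothing$, hence $\dot\eta_1\equiv 0$ and $\eta_1(t)=\eta_1^0(0)\geq 0$ for all $t>0$: this is the base case. For the inductive step fix $i>1$ and assume $\eta_j(t)\geq 0$ for every $j<i$ and every $t\geq -\tau$ (this covers $[-\tau,0]$, where $\eta_j=\eta_j^0\geq 0$ by hypothesis, and $t>0$ by the induction hypothesis). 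Since every $j\in\mathcal{L}(i)$ satisfies $j<i$, and since for $t>0$ the argument $t-\tau$ lies in $[-\tau,\infty)$, the function
\[
f_i(t):=\sum_{j\in\mathcal{L}(i)}a_{ij}(t-\tau)\,\eta_j(t-\tau)
\]
is nonnegative on $(0,\infty)$, the coefficients $a_{ij}(t-\tau)=\psi(|x_i(t-\tau)-x_j(t-\tau)|)$ being nonnegative. Likewise $\alpha_i(t):=\sum_{j\in\mathcal{L}(i)}a_{ij}(t-\tau)\geq 0$ and is continuous. Thus $\eta_i$ solves the scalar linear equation $\dot\eta_i(t)=-\alpha_i(t)\eta_i(t)+f_i(t)$ on $(0,\infty)$ with $\eta_i(0)=\eta_i^0(0)\geq 0$; multiplying by the positive integrating factor $\exp\!\big(\int_0^t\alpha_i(s)\,ds\big)$ gives that $t\mapsto \eta_i(t)\exp\!\big(\int_0^t\alpha_i(s)\,ds\big)$ has nonnegative derivative $f_i(t)\exp\!\big(\int_0^t\alpha_i(s)\,ds\big)$, hence is nondecreasing and therefore $\geq \eta_i^0(0)\geq 0$. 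Consequently $\eta_i(t)\geq 0$ for all $t>0$, and the induction closes.

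The argument is essentially mechanical, so I do not expect a genuine obstacle; the two points deserving a little care are (i) verifying that the delayed leaders' values $\eta_j(t-\tau)$, $j\in\mathcal{L}(i)$, are already controlled by the induction hypothesis — which is exactly guaranteed by the HL-flock property $j<i$ together with the nonnegativity of the initial data on all of $[-\tau,0]$ — and (ii) noticing that the $\eta_i$-equation has no self-delay (the term $-a_{ij}(t-\tau)\eta_i(t)$ uses $\eta_i$ at the current time), which is precisely what allows the one-dimensional integrating-factor estimate to be applied directly rather than through a Gr\"onwall-type or method-of-steps argument for the delayed term.
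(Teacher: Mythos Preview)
Your proof is correct and follows the same inductive strategy as the paper: both exploit the triangular structure of the HL-flock to reduce the claim, at each step, to a positivity statement for a single scalar linear ODE with nonnegative forcing coming from the already-treated leaders. The only difference is in how the scalar step is dispatched: the paper argues by contradiction (taking the infimum of times where $\eta_k$ becomes negative and checking the sign of the derivative there), whereas you use the integrating factor to write $\eta_i(t)e^{\int_0^t\alpha_i}$ as a nondecreasing function. Your version is slightly more direct and gives the same conclusion; otherwise the two proofs are essentially identical.
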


\begin{proof}
Note that in system (\ref{positivitywithdelay}), if an agent $j$ is in $[\mathcal{L}](i)$, then $\eta_j$ is not influenced by agents outside of $[\mathcal{L}](i)$. It is sufficient, then, to prove the statement for the system (\ref{positivitywithdelay}) restricted to agents in $[\mathcal{L}](i)$, for each $i=1, \dots, N$.

We will  proceed by induction. Consider the first agent, agent 1. By definition of an HL-flock, $\mathcal{L}(1)=\varnothing$, which implies that

\begin{equation}
	\frac{d\eta_1}{dt}=0 \ \Rightarrow \quad \eta_1(t)=\eta_1(0)=\eta_1^0(0)\geq 0,\quad \forall\  t\geq0.
	\label{eta1}
\end{equation}
The equation for agent 2 will be, using (\ref{eta1}),

$$\frac{d\eta_2}{dt}=a_{21}(t-\tau)(\eta_1(t-\tau)-\eta_2(t))=a_{21}(t-\tau)(\eta_1(0)-\eta_2(t)).$$

Proceeding for contradiction, assume there exists some $\bar{t}>0$ such that $\eta_2(\bar{t})<0$, and let
us denote

$$t^*=\inf\{t>0\;|\;\eta_2(s)<0\ \mbox{\rm for}\ s\in (t,\bar {t})\,\}.$$

Then, by definition of $t^*$, $\eta_2(t^*)=0$ and
$\eta_2(s)<0$ for $s\in (t^*,\bar{t}).$
Using (\ref{eta1}),

$$\frac{d\eta_2}{dt}(s)=a_{21}(s-\tau)({\eta_1(0)}-{\eta_2(s)})\geq 0,\quad s\in [t^*, \bar {t}),$$

which contradicts the fact that $\eta_2(t)<0$ for $t\in (t^*, \bar t)$ and $\eta_2(t^*)=0.$ Hence, $\eta_2(t)\geq 0$ for all $t\ge 0$.

Now, as the induction hypothesis, assume that $\eta_i(t)\geq 0$ for all $t>0$ and for all $i\in\{1,\dots, k-1\}$.

The equation for agent $k$ will be

$$\frac{d\eta_k}{dt}=\sum_{j\in\mathcal{L}(k)}a_{kj}(t-\tau)(\eta_j(t-\tau)-\eta_k(t)).$$
By contradiction, assume there exists $\bar{t}>0$ such that $\eta_k(\bar{t})<0$ and let

$$t^*=\inf\{t>0\;|\;\eta_k(s)<0\ \mbox{\rm for}\ s\in (t,\bar {t})\,\}.$$

Then, $\eta_k(t^*)=0$ and $\eta_k(s)<0$ for $s\in (t^*, \bar{t}).$ We can use the induction hypothesis on the agents $j\in\mathcal{L}(k)\subseteq \{1, \dots, k-1\}$, so

$$\frac{d\eta_k}{dt}(s)=\sum_{j\in\mathcal{L}(k)}a_{kj}(s-\tau)({\eta_j(s-\tau)}-{\eta_k(s)})\geq 0,\quad s\in [t^*, \bar t)\,,$$
which gives a contradiction.

\noindent Therefore, we have proved that $\eta_i(t)\geq 0$ for all $i\in\{1, \dots, N\}.$
 \end{proof}

\begin{Proposition}\label{boundedness}
Let $\Omega$ be a convex and compact domain in $\RR^d$ and  let $({x_i},{v_i}),$ $i=1, \dots, N,$ be a solution of system $(\ref{CSShendelay})$. If ${v_i}(s)\in\Omega$ for all $i=1,\dots,N$ and $s\in[-\tau ,0]$, then ${v_i}(t)\in\Omega$ for all $i=1,\dots,N$ and $t>0$.
In particular, taking $\Omega=B_{D_0}(0)$ with
$$D_0=\max_{1\leq i\leq N}\max_{s\in[-\tau ,0]}|{v_i}(s)|,$$ then $\vert {v_i}(t)|\leq D_0$ for all $t>0$ and $i=1,\dots, N$.
\end{Proposition}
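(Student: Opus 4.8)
The plan is to exploit the hierarchical ordering and argue by induction on the agent index $i$, reducing the $i$-th step to a linear inhomogeneous ODE in $\RR^d$ whose solution can be written explicitly as a convex average of points already known to lie in $\Omega$. Since $\Omega$ is convex and closed, such an average stays in $\Omega$, and the induction closes.

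First I would use that in an HL-flock $\mathcal{L}(i)\subseteq\{1,\dots,i-1\}$, so the equation for $v_i$ in $(\ref{CSShendelay})$ involves only $v_i(t)$ and the retarded velocities $v_j(t-\tau)$ with $j<i$. For $i=1$ one has $\mathcal{L}(1)=\varnothing$, hence $\dot v_1\equiv 0$ and $v_1(t)=v_1^0(0)\in\Omega$ for every $t\ge0$. Assume now, as induction hypothesis, that $v_j(t)\in\Omega$ for all $t\ge-\tau$ and all $j\le k-1$. Writing the $k$-th equation as
\[
\dot v_k(t)=-A_k(t)\,v_k(t)+b_k(t),\qquad A_k(t):=\sum_{j\in\mathcal{L}(k)}a_{kj}(t-\tau)\ge0,\quad b_k(t):=\sum_{j\in\mathcal{L}(k)}a_{kj}(t-\tau)\,v_j(t-\tau),
\]
the functions $A_k,b_k$ are continuous on $[0,+\infty)$, and by the induction hypothesis (together with the initial condition $(\ref{IC})$ when $t-\tau\in[-\tau,0]$) one has $v_j(t-\tau)\in\Omega$ for all $t\ge0$ and all $j\in\mathcal{L}(k)$. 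Introducing the integrating factor $\mu(t):=\exp\big(\int_0^t A_k(s)\,ds\big)\ge1$ one gets $\frac{d}{dt}(\mu v_k)=\mu\,b_k$, hence
\[
v_k(t)=\frac{1}{\mu(t)}\,v_k(0)+\frac{1}{\mu(t)}\int_0^t\mu(s)\,b_k(s)\,ds ,
\]
and, since $\mu'=\mu A_k$, the corresponding weights add up to one: $\frac{1}{\mu(t)}+\frac{1}{\mu(t)}\int_0^t\mu(s)A_k(s)\,ds=\frac{1}{\mu(t)}+\frac{\mu(t)-1}{\mu(t)}=1$. Thus $v_k(t)$ is a convex combination, with nonnegative weights $\frac{1}{\mu(t)}$ and $\frac{\mu(s)a_{kj}(s-\tau)}{\mu(t)}\,ds$ totalling $1$, of $v_k(0)\in\Omega$ and of the points $v_j(s-\tau)\in\Omega$ for $s\in[0,t]$, $j\in\mathcal{L}(k)$. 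Testing against an arbitrary vector $e\in\RR^d$ gives $\langle v_k(t),e\rangle\le\sup_{y\in\Omega}\langle y,e\rangle$, and since $\Omega$ (being convex and closed) is the intersection of all half-spaces containing it, it follows that $v_k(t)\in\Omega$ for all $t\ge0$. Combined with the initial condition this yields $v_k(t)\in\Omega$ for all $t\ge-\tau$, which closes the induction; the last assertion follows by taking $\Omega$ to be the closed ball $B_{D_0}(0)$.

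The only delicate point is the presence of the delay: because $\dot v_i(t)$ depends on the other velocities at the retarded time $t-\tau$, a naive ``the velocity diameter cannot increase'' argument applied directly to the coupled system is not available. The hierarchical structure is exactly what removes this obstacle — at step $k$ every retarded velocity entering the equation for $v_k$ comes from an agent already treated, so $v_k$ solves a genuinely linear equation with an $\Omega$-valued forcing term, for which the convex-average representation above is immediate. As an alternative one could imitate the contradiction argument of Proposition \ref{positivityTh}, applying it on successive time windows of length $\tau$ to the support function $t\mapsto\max_i\langle v_i(t),e\rangle$ for each fixed direction $e$; the induction/integrating-factor route above is shorter.
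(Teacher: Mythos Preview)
Your argument is correct, but it proceeds differently from the paper. The paper first projects onto an arbitrary direction: for a unit vector $\nu$ and a point $a\in\RR^d$ it sets $\eta_i=\nu\cdot(v_i-a)$, observes that the $\eta_i$ satisfy exactly the scalar system of Proposition~\ref{positivityTh}, and invokes that positivity result; the passage from ``nonnegative in every supporting half-space'' to ``$v_i(t)\in\Omega$'' is then delegated to Shen's undelayed argument. In other words, the paper's route is \emph{reduce to scalars, then apply the already-proved positivity lemma}. Your route is \emph{induct on the agent index and solve the $k$-th linear ODE explicitly}, using the integrating-factor formula to exhibit $v_k(t)$ as a weighted average (weights nonnegative, summing to $1$) of points in $\Omega$, and only then invoking the support-function characterization of the closed convex set $\Omega$. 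Both arguments ultimately rest on the hierarchical structure to decouple the system agent by agent; your version is self-contained and bypasses Proposition~\ref{positivityTh} entirely, at the cost of writing out the variation-of-constants formula, while the paper's version is shorter on the page because the inductive work has already been done in Proposition~\ref{positivityTh}. Your closing remark that one could instead mimic the contradiction argument of Proposition~\ref{positivityTh} on the support function is, in fact, essentially what the paper does.
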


\begin{proof}
Let $\nu\in S^{n-1}$ be a unit vector and  let ${a}\in\RR^d$ be a given vector. Define $\eta_i=\nu\cdot({v_i}-{a})$. We now claim that if $\eta_i(s)\geq 0$ for all $i$ and $s\in[-\tau ,0]$ then $\eta_i(t)\geq0$ for all $i$ and $t>0$. Using (\ref{CSShendelay}),

\begin{align*}
\frac{d \eta_i}{dt}(t)&=\nu\cdot \frac{d {v_i}}{dt}(t)\\
&=\nu\cdot\Big(\sum_{j\in\mathcal{L}(i)}a_{ij}(t-\tau)({v_j}(t-\tau)-{v_i}(t))\Big)\\
&=\nu\cdot\Big(\sum_{j\in\mathcal{L}(i)}a_{ij}(t-\tau)[({v_j}(t-\tau)-{a})-({v_i}(t)-{a})]\Big)\\
&=\sum_{j\in\mathcal{L}(i)}a_{ij}(t-\tau)(\eta_j(t-\tau)-\eta_i(t)).
\end{align*}
Now we can use Proposition \ref{positivityTh} and we see that, indeed, the claim is true. Then the result follows arguing as in the undelayed case (see Th. 4.2 of \cite{Shen}).
\end{proof}

\section{The flocking result} \label{sect3}
\setcounter{equation}{0}

Using the above propositions, we are ready to prove
our main result, namely the existence of flocking solutions in a flock under hierarchical leadership satisfying the Cucker-Smale  system with delay (\ref{CSShendelay}). To do this, we combine arguments used to deal with the  undelayed models in both standard CS-system \cite{HL} and CS-system  with hierarchical leadership \cite{Shen}. New ingredients are needed in order to take into account the delay term.

\begin{Theorem}\label{flockingtheoremHLdelay}
Let $(x_i, v_i),$ $i=1, \dots, N,$ be a solution of the Cucker-Smale system under hierarchical leadership with delay $(\ref{CSShendelay})$ with initial conditions $(\ref{IC}).$ Assume
\begin{equation}\label{divergenttail}
\int_a^{\infty}\psi(s)\;ds=+\infty,
\end{equation}
for some $a>0$. Then,
\begin{equation}\label{flockingestimate}
V(t)=O(e^{-Bt}),
\end{equation}
with a constant $B>0$ depending only on the initial configuration and the parameters of the system.
\end{Theorem}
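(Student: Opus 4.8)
The plan is to argue by induction on the hierarchy, exploiting the fact that the ultimate leader (agent $1$) moves at constant velocity. Let me set up the natural "generational" decomposition: for each agent $i$, let $\ell(i)$ be the smallest $m$ such that $1 \in \mathcal{L}^m(i)$, i.e. the depth of agent $i$ in the leadership tree. The inductive claim will be that for each generation $k$, the velocity fluctuations of all agents $i$ with $\ell(i) \le k$, measured relative to $v_1$, decay exponentially: more precisely, $\max_{\ell(i)\le k} |v_i(t) - v_1| = O(e^{-B_k t})$ for some $B_k > 0$, and (crucially, since we need a divergent-tail argument to close each level) the associated position differences $|x_i(t) - x_j(t)|$ stay bounded for such $i,j$, so that $\psi$ evaluated along those trajectories stays bounded below by a positive constant. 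The final estimate \eqref{flockingestimate} then follows because $V(t) \le 2\max_i |v_i(t) - v_1|$ and there are finitely many generations.

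The base case $k=0$ is immediate from \eqref{eta1}: agent $1$ has $\dot v_1 = 0$, so $v_1$ is constant. For the inductive step, fix an agent $i$ with $\ell(i) = k+1$; all agents $j \in \mathcal{L}(i)$ have $\ell(j) \le k$, so by the inductive hypothesis $|v_j(t) - v_1|$ decays exponentially and the mutual positions of all lower-generation agents stay bounded. I would introduce the shifted variable $w_i(t) = v_i(t) - v_1$, which satisfies
\begin{equation*}
\dot w_i(t) = \sum_{j\in\mathcal{L}(i)} a_{ij}(t-\tau)\big(w_j(t-\tau) - w_i(t)\big).
\end{equation*}
Now I would run a Lyapunov/Gronwall-type argument on $|w_i(t)|$ (or on $\max_{\ell(i)\le k+1}|w_i(t)|$ simultaneously over that generation). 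The dissipative term is $-\big(\sum_j a_{ij}(t-\tau)\big)|w_i(t)|$; the forcing term involves $a_{ij}(t-\tau)|w_j(t-\tau)|$, which is already known to be exponentially small. The delay creates the usual difficulty: $|w_i(t)|$ is compared against its own recent history. The standard way to handle this is to estimate $\frac{d}{dt}|w_i(t)| \le -\alpha(t)|w_i(t)| + \alpha(t)\sup_{s\in[t-\tau,t]}|w_i(s)| + (\text{exp. small})$, where $\alpha(t) = \sum_{j\in\mathcal{L}(i)} a_{ij}(t-\tau)$, and then — provided $\alpha(t)$ is bounded away from $0$, which requires boundedness of the relevant position differences — derive exponential decay via a delayed-Gronwall inequality (e.g. Halanay-type), possibly after first establishing a crude bound showing $w_i$ is bounded (Proposition \ref{boundedness} gives $|v_i(t)|\le D_0$, hence $|w_i(t)| \le 2D_0$).

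The genuine obstacle, and the place where \eqref{divergenttail} enters, is closing the loop between velocity decay and position boundedness: to get exponential decay of $w_i$ I need $\alpha(t) \ge \psi(X_{\mathrm{rel}}(t)) \cdot |\mathcal{L}(i)| $ bounded below, i.e. I need the diameter of the positions of agent $i$ and its leaders to stay bounded; but to bound that diameter I integrate the velocity differences, which requires their decay (or at least integrability). This is the classical Cucker–Smale bootstrap, and with divergent tail it is handled by the standard argument: suppose, for contradiction, that the relevant position diameter $D_i(t)$ reaches some large threshold; on the maximal interval before that, $\psi(D_i(\cdot)) \ge \psi(D_i^{\max})$ is bounded below, so $w_i$ decays exponentially at a rate controlled by $\psi(D_i^{\max})$, hence $\int_0^t |w_i| $ is bounded by a constant over $\psi(D_i^{\max})$ — but $\int_{D_i(0)}^{D_i^{\max}} \psi(s)\,ds \to \infty$ as $D_i^{\max}\to\infty$ while the accumulated displacement grows only like $1/\psi(D_i^{\max})$ times a constant, giving a contradiction for $D_i^{\max}$ large. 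The delay only shifts integration windows by $\tau$ and contributes bounded correction terms (using $|v_i|\le D_0$ on $[-\tau,0]$ and Proposition \ref{boundedness}), so it does not affect the divergent-tail mechanism. Assembling the finitely many generational rates $B_k$ into a single $B = \min_k B_k > 0$ completes the proof.
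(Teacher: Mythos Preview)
Your inductive skeleton is sound, but there is one confusion and one genuine gap.

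The confusion: in the equation $\dot w_i(t)=\sum_{j\in\mathcal{L}(i)}a_{ij}(t-\tau)(w_j(t-\tau)-w_i(t))$ the only delayed quantities are the $w_j$ with $j\in\mathcal{L}(i)$, and these are already exponentially small by the inductive hypothesis. There is no $w_i(t-\tau)$ term, so $|w_i|$ is \emph{not} compared against its own recent history; the Halanay-type inequality you wrote, with $\alpha(t)\sup_{s\in[t-\tau,t]}|w_i(s)|$ on the right, never arises (and with matching coefficients on the instantaneous and delayed terms it would not yield decay anyway). The correct bound is simply $\frac{d}{dt}|w_i|\le -\alpha(t)|w_i|+Ce^{-B_kt}$, and ordinary Gronwall suffices once $\alpha$ is bounded below. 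This is overkill rather than an error.

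The genuine gap is your bootstrap for position boundedness. Freezing $\psi\ge\psi(D_i^{\max})$ on $\{D_i\le D_i^{\max}\}$ yields, as you say, $\int_0^T|w_i|\le C/\psi(D_i^{\max})$, hence
\[
\big(D_i^{\max}-D_i(0)\big)\,\psi(D_i^{\max})\le C.
\]
But this is \emph{not} contradicted by the divergent-tail hypothesis $\int^\infty\psi=+\infty$: take $\psi(s)=1/(s\log s)$ for large $s$, which has divergent tail while $R\,\psi(R)=1/\log R\to 0$, so your inequality places no restriction on $D_i^{\max}$ whatsoever. The divergent tail controls $\int_{D_i(0)}^{D_i^{\max}}\psi(s)\,ds$, and your argument never produces a bound on that integral. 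The paper closes this loop with the Ha--Liu Lyapunov functionals: writing $v^l=v_l-\hat v_l$ (with $\hat v_l$ the average velocity over $\mathcal{L}(l)$) and $x^l$ analogously, it sets
\[
\mathcal{L}^l_{\pm}(t)=|v^l(t)|\pm d_l\,\Phi\big(|x^l(t)|+\tilde M\big),\qquad \Phi'=\psi,
\]
and computes $\frac{d}{dt}\mathcal{L}^l_\pm\le Ce^{-bt}$. Integrating gives directly
\[
\Big|\int_{|x^l(\tau)|+\tilde M}^{|x^l(t)|+\tilde M}\psi(s)\,ds\Big|\le |v^l(\tau)|+\frac{C}{b},
\]
and \emph{now} the divergent tail forces $|x^l(t)|$ to stay bounded; exponential decay of $|v^l|$ then follows from Gronwall. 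The induction is carried on the number of agents, propagating at each step both $|v_i(t)-v_j(t)|=O(e^{-bt})$ and $|v_i(t-\tau)-v_j(t)|=O(e^{-bt})$, the second being precisely what makes the delayed forcing terms exponentially small at the next level.
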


\begin{proof}
We will use induction on the number of agents in the flock.

Consider first a flock of 2 agents ${1,2}$. Since, by definition of an HL-flock, $\mathcal{L}(2)\neq \varnothing$, we must have $\mathcal{L}(2)=\{1\}$, i.e. $a_{21}>0$. Again by definition of an HL-flock, $a_{12}=0$. Then,
	
$$
\frac{d{v_1}}{dt}=0 \quad \Rightarrow \quad {v_1}(t)= {v_1}(0),\quad\forall\ t>0, $$
and
\begin{equation}\label{C1}
\frac{d {v_2}}{dt}=a_{21}(t-\tau)({v_1}(t-\tau)-{v_2}(t))=a_{21}(t-\tau)({v_1}(0)-{v_2}(t)),\quad t\ge\tau.
\end{equation}	
We now denote
\begin{equation}\label{C2}
{x}^2(t)={x_2}(t)-{x_1}(t)\quad\mbox{\rm  and}\quad {v}^2(t)={v_2}(t)-{v_1}(t).
\end{equation}
Then, from (\ref{C1}), we have

\begin{equation}\label{vt}
\frac{d {v^2}}{dt}=\frac{d {v_2}}{dt}-{\frac{d {v_1}}{dt}}=-a_{21}(t-\tau){v^2}(t),\quad t\ge\tau ,
\end{equation}
and thus
$$
\frac 12 \frac {d \vert v^2\vert^2} {dt}=-a_{21}(t-\tau )\vert v^2\vert^2\,,
$$
which gives
\begin{equation}\label{C3}
 \frac{d|{v^2}|}{dt}\leq -\psi\left(\vert {x_2}(t-\tau)-{x_1}(t-	 \tau)|\right)\;|{v^2}(t)|\,, \quad t\ge\tau\,.
\end{equation}
Therefore, from (\ref{C3}), we deduce that
 $|{v^2}(t)|$ is decreasing in time and so

\begin{equation}\label{decreasingv}
	|{v^2}(t)|\leq |{v^2}(\tau)|\,,\quad t\ge\tau\,.
\end{equation}
Note also that the equation for the position is
$$\frac{d {x^2(t)}}{dt}={v^2(t)}$$
which easily gives

\begin{equation}\label{C5}
 \left|\frac{d|{x^2(t)}|}{dt}\right|\leq |{v^2}(t)|\,,\quad t>0\,.
 \end{equation}
Now, observe that

$$
\begin{array}{l}
\displaystyle{{x_1}(t-\tau)-{x_2}(t-\tau)={x_1}(t)-{x_2}(t)+\int_t^{t-\tau}{({x_1}-{x_2})^\prime }(s)\;ds}\\
\hspace{2 cm}\displaystyle{
={x_1}(t)-{x_2}(t)-\int_t^{t-\tau}{v^2}(s)\;ds,}
\end{array}
$$
which, along with (\ref{decreasingv}), implies that

\begin{equation}\label{C4}
|{x_1}(t-\tau)-{x_2}(t-\tau)|\leq |{x_1}(t)-{x_2}(t)|+\vert {v^2}(\tau )|\tau=|{x^2}(t)|+|{v^2}(\tau )|\tau\,,
\quad t\ge 2\tau\,,
\end{equation}
with $x^2(t), v^2(t)$ defined in (\ref{C2}).
Using this inequality in (\ref{vt}) along with the fact that $a_{21}(t)=\psi(|{x^2(t)}|)$ with $\psi$ decreasing, we obtain

\begin{equation}\label{vt2}	
	\frac{d|{v^2(t)}|}{dt}\leq -\psi(|{x^2}(t)|+|{v^2}(\tau )|\tau)\; \vert {v^2}(t)|\,,\quad t\ge 2\tau\,.
\end{equation}
Consider now the Lyapunov functionals (cfr. \cite{HL})

\begin{equation}\label{Lyap2}
\mathcal{L}^2_{\pm}(t)=|{v^2(t)}|\pm\Phi(|{x^2}(t)|+|{v^2}(\tau )|\tau),
\end{equation}
where the function $\Phi$ is such that
$\Phi^\prime (r)=\psi(r),$ $r\in (0,+\infty ).$ From  (\ref{vt2}), we obtain

\begin{equation}\label{C6}
\begin{array}{l}
\displaystyle{
\frac{d\mathcal{L}^2_{\pm}}{dt}=
\frac{d|{v^2(t)}|}{dt}\pm \psi(|{x^2(t)}|+|{v^2}(\tau )|\tau )\frac{d|{x^2(t)}|}{dt}
}\\
\hspace{1 cm}
\displaystyle{
\leq -\psi(|{x^2(t)}|+|{v^2}(\tau )|\tau )|{v^2(t)}|\pm \psi(|{x^2(t)}|+|{v^2}(\tau )|\tau)\frac{d|{x^2(t)}|}{dt}}\\
\hspace{1 cm}\displaystyle{
=\psi(|{x^2(t)}|+|{v^2}(\tau )|\tau )\left(
\pm\frac{d|{x^2(t)}|}{dt} -|{v^2(t)}| \right )\leq 0\,,\quad t\ge 2\tau\,,}
\end{array}
\end{equation}
where we have used (\ref{C5}).
Hence, $\mathcal{L}^2_{\pm}(t)\leq \mathcal{L}^2_{\pm}(\tau )$, so

$$|{v^2}(t)|\pm \Phi(|{x^2}(t)|+|{v^2}(\tau )|\tau)\leq |{v^2}(\tau )|\pm \Phi(|{x^2}(\tau )|+|{v^2}(\tau )|\tau),$$

or

\begin{equation}\label{C7}
|{v^2}(t)|-|{v^2}(\tau )|\leq \pm \Phi(|{x^2}(\tau )|+|{v^2}(\tau )|\tau) \mp \Phi (|{x^2}(t)|+|{v^2}(\tau )|\tau)\,.
\end{equation}
From (\ref{C7}) we then deduce,

$$|{v^2}(t)|-|{v^2}(\tau )|\leq-\left|\int_{|{x^2}(\tau )|+\tau|{v^2}(\tau )|}^{|{x^2}(t)|+\tau |{v^2}(\tau )|}\psi(s)\;ds\right|\,.$$
In particular,

\begin{equation}\label{v0}
|{v^2}(\tau )|\geq \left|\int_{|{x^2}(\tau )|+\tau |{v^2}(\tau )|}^{|{x^2}(t)|+\tau |{v^2}(\tau )|}\psi(s)\;ds\right |\,.
\end{equation}
Since we have assumed that  $\psi$ satisfies (\ref{divergenttail}), then there must exist some $x_M^2\geq 0$ such that
\begin{equation}\label{C9}
|{v^2}(\tau )|=\int_{|{x^2}(\tau )|+\tau |{v^2}(\tau )|}^{x^2_M}\psi (s)
\,ds
\end{equation}
 which, along with (\ref{v0}), implies

\begin{equation}\label{C10}
|{x^2}(t )|+\tau |{v^2}(\tau )|\leq x^2_M,
\quad t\ge 2\tau\,,
\end{equation}
since $\psi$ is a nonnegative function. Now, using (\ref{vt2}) along with the fact that $\psi$ is non-increasing, we have

$$\frac{d|{v^2(t)}|}{dt}\leq -\psi(x^2_M)|{v^2}(t)|,\quad t\ge 2\tau\,,$$
and the Gronwall inequality gives us
\begin{equation}\label{C11}
|{v^2}(t)|\leq e^{-\psi(x^2_M)(t-\tau )}|{v^2}(\tau )|,\quad t\ge 2\tau\,.
\end{equation}

Now, for the flock of two agents ${1,2}$ we have, since ${v_1}(t)$ is constant for $t\ge 2\tau ,$

\begin{equation}\label{C12}
|{v_1}(t)-{v_2}(t)|=|{v_1}(t-\tau)-{v_2}(t)|=O(e^{-\psi(x_M^2)t}).
\end{equation}
We have also,

\begin{equation}\label{C13}
|{v_2}(t-\tau)-{v_1}(t)|\leq |{v_2}(t-\tau)-{v_2}(t)|+|{v_2}(t)-{v_1}(t)|\,.
\end{equation}
Observe that,

\begin{equation}\label{C14}
\begin{array}{l}
\displaystyle{
|{v_2}(t-\tau)-{v_2}(t)|= \left|\int_{t-\tau}^t {v_2}^\prime (s)\;ds\right|=\left|\int_{t-\tau}^t a_{21}(s-\tau)({v_1}(s-\tau)-{v_2}(s))\;ds\right|}\\
\hspace{1 cm}
\displaystyle{
\leq c\int_{t-\tau}^te^{-\psi(x^2_M)s}\;ds
\leq c\tau e^{-\psi(x^2_M)(t-\tau)}=c\tau e^{\psi(x^2_M)\tau}e^{-\psi(x^2_M)t}=O(e^{-\psi(x^2_M)t}),}
\end{array}
\end{equation}
so we have

\begin{equation}\label{C15}
|{v_2}(t-\tau)-{v_1}(t)|=O(e^{-\psi(x^2_M)t}).
\end{equation}

Then, the 2-flock {1,2} satisfies the estimates (\ref{C13}), (\ref{C14}) and (\ref{C15}). Moreover, of course,
$\vert v_1(t-\tau )-v_1(t)\vert = O(e^{-\psi(x^2_M)t}),$ being $v_1(t)$ constant for $t\ge 2\tau\,.$

We assume now, by induction, that analogous exponential estimates are satisfied
for a  flock of
 $l-1$ agents ${1, \dots, l-1}$ with $l>2$, i.e. there exists some constant $b>0$ such that $\forall\ i, j=1,\dots,l-1,$

\begin{align}
	&|{v_i}(t)-{v_j}(t)|=O(e^{-bt}),\label{inductionassumptiondelay1}\\
	&|{v_i}(t-\tau)-{v_j}(t)|=O(e^{-bt}).
	\label{inductionassumptiondelay2}
\end{align}

 Then, we want to prove the same kind of estimates  also for a flock with $l>2$ agents $\{1, \dots, l\}$.
This will prove our theorem.

 Define the average position and velocity of the leaders of agent $l$,

\begin{equation}\label{C16}
{\hat{x_l}}=\frac{1}{d_l}\sum_{i\in\mathcal{L}(l)}{x_i}(t)\;\;\; \mbox{and}\;\;\;{\hat{v_l}}=\frac{1}{d_l}\sum_{i\in\mathcal{L}(l)}{v_i}(t), \;\;\; d_l=\#\mathcal{L}(l),
\end{equation}
where $\# A$ denotes the cardinality of a set $A.$
Also, define
\begin{equation}\label{C17}
 {x^l(t)}={x_l(t)}-{\hat{x_l}(t)}\quad\mbox{\rm  and}\quad
{v^l(t)}={v_l(t)}-{\hat{v_l}(t)}.
\end{equation}
Then,

\begin{equation}\label{C18}
\frac{d {v^l}}{dt}=\frac{d {v_l}}{dt}-\frac{d {\hat{v_l}}}{dt}=\sum_{j\in\mathcal{L}(l)}a_{lj}(t-\tau)({v_j}(t-\tau)-{v_l}(t))-\frac{d {\hat{v_l}}}{dt}.
\end{equation}
By adding and subtracting $\sum a_{lj}(t-\tau ){\hat{v_l}}$ in (\ref{C18}) we get

\begin{equation}\label{dvdtdelay}	
\frac{d {v^l}}{dt}=\sum_{j\in\mathcal{L}(l)}a_{lj}(t-\tau)({\hat{v_l}}(t)-{v_l}(t))+\sum_{j\in\mathcal{L}(l)}a_{lj}(t-\tau)({v_j}(t-\tau)-{\hat{v_l}}(t))-\frac{d {\hat{v_l}}}{dt}.
\end{equation}
Using the induction hypothesis (\ref{inductionassumptiondelay2}), since $\mathcal{L}(i),\mathcal{L}(l)\subseteq \{1, \dots, l-1\}$,

\begin{equation}\label{epsilon1delay}
	\frac{d {\hat{v_l}}}{dt}=\frac{1}{d_l}\sum_{i\in\mathcal{L}(l)}\frac{d {v_i}}{dt}=\frac{1}{d_l}\sum_{i\in\mathcal{L}(l)}\sum_{j\in\mathcal{L}(i)}a_{ij}(t-\tau)({v_j}(t-\tau)-{v_i}(t))=O(e^{-bt}).
\end{equation}
Using once again the induction hypothesis  (\ref{inductionassumptiondelay2}),

\begin{equation}\label{C19}
	\sum_{j\in\mathcal{L}(l)}a_{lj}(t-\tau)({v_j}(t-\tau)-{\hat{v_l}}(t))
	= \frac 1 {d_l}\sum_{j\in\mathcal{L}(l)}a_{lj}(t-\tau)
	\sum_{i\in\mathcal{L}(l)} (v_j(t-\tau) -v_i(t))
	=O(e^{-bt}).
\end{equation}

Now, (\ref{dvdtdelay}) becomes

\begin{equation} \label{vtdelayy}
	\frac{d {v^l}}{dt}=-\sum_{j\in\mathcal{L}(l)}a_{lj}(t-\tau) {v^l}(t)+ O(e^{-bt}),
\quad t\ge\tau\,.	
\end{equation}
with
$$a_{lj}(t-\tau)=\psi(|{x_l}(t-\tau)-{x_j}(t-\tau)|).$$
Observe that for every $j\in \mathcal{L}(l)$ it results

\begin{equation}\label{C20}
\begin{array}{l}
\displaystyle{
|{x_l}(t-\tau)-{x_j}(t-\tau)|\le |{x_l}(t-\tau)-{\hat{x}_l}(t-\tau)|+|{x_j}(t-\tau)-{\hat{x}_l}(t-\tau)|}
\\\\
\hspace{4 cm}\displaystyle{
\leq |{x^l}(t-\tau)|+M_l,}
\end{array}
\end{equation}
for some $M_l>0$, due to the induction hypotheses. Therefore, since ${\psi}$ is non-increasing,

$$
{\psi}(|{x_l}(t-\tau)-{x_j}(t-\tau)|)\geq {\psi}\left((|{x^l}(t-\tau)|+
M_l)\right ),
$$
which, along with (\ref{vtdelayy}), implies

\begin{equation}\label{dvdelay2}
	\frac{d|{v^l}|}{dt}\leq -d_l \psi\left(|{x^l}(t-\tau)|+M_l\right)|{v^l}(t)|+ce^{-bt},\quad t\ge\tau\,.
\end{equation}

Define
$$D_0=2\max_{1\leq i\leq l}\max_{s\in[-\tau,0]}{v_i}(s).$$
From Proposition \ref{boundedness}, $|{v_i}(t)|\leq D_0/2$ for all $i$ and for all $t>0$, which implies

$$|{v^l}(t)|\leq \frac{1}{d_l}\sum_{j\in\mathcal{L}(l)}|{v_j}(t)-{v_l}(t)|\leq \frac{1}{d_l}\sum_{j\in\mathcal{L}(l)}D_0=D_0.$$
Then,
\begin{equation}\label{C21}
|{x^l}(t-\tau)|\leq |{x^l}(t)|+\tau D_0,\quad t\ge \tau\,,
\end{equation}
which, in (\ref{dvdelay2}), yields

\begin{equation}\label{vt3}
	\frac{d|{v^l}|}{dt}\leq -d_l \psi\left(|{x^l}(t)|+\tau D_0+M_l\right)|{v^l}(t)|+ce^{-bt}.
\end{equation}

Now consider the Lyapunov
functionals
\begin{equation}\label{Lyal}
\mathcal{L}^l_{\pm}(t)=|{v^l}(t)|\pm d_l\Phi\Big(|{x^l}(t)|+{\tau D_0+M_l}\Big),
\end{equation}
where as before $\Phi$ is a function such that  $\Phi^\prime (r)=\psi(r),$ $r\in (0,+\infty).$ Then, denoting $\tilde M= \tau D_0+M_l,$

\begin{equation}\label{C22}
\begin{array}{l}
\displaystyle{
	\frac{d\mathcal{L}^l_{\pm}}{dt}=\frac{d|{v^l}|}{dt}\pm d_l\psi\left(|{x^l}(t)|+\tilde{M}\right)\frac{d|{x^l}|}{dt}}\\
	\hspace{1 cm}\displaystyle{
	\leq -d_l\psi\left(|{x^l}(t)|+\tilde{M}\right)|{v^l}(t)|+ce^{-bt}\pm d_l\psi\left(|{x^l}(t)|+\tilde{M}\right)\frac{d|{x^l}|}{dt}}\\
	\hspace{1 cm}
\displaystyle{	
	= d_l\psi\left(|{x^l}(t)|+\tilde{M}\right)\left(\pm\frac{d|{x^l}|}{dt}-|{v^l}(t)|\right)+ce^{-bt}\leq ce^{-bt},\quad t\ge\tau\,,}
\end{array}
\end{equation}
since, from $\frac {d{x^l}}{dt} ={v^l}$, we have $\left|\frac{d\vert {x^l}|}{dt}\right|\leq|{v^l}(t)|$.

Thus, integrating (\ref{C22}) in $[\tau , t],$  we deduce

$$\mathcal{L}^l_{\pm}(t)-\mathcal{L}^l_{\pm}(\tau )\leq c\int_\tau ^te^{-bs}\;ds=\frac{c}{b}(e^{-b\tau}-e^{-bt})\leq \frac{c}{b},$$
which implies

$$|{v^l}(t)|-|{v^l}(\tau )|\leq \pm d_l\left(\Phi\left(\vert {x^l}(\tau )|+\tilde {M}\right)-\Phi\left(|{x^l}(t)|+\tilde{M}\right)\right)+\frac{c}{b},$$
namely
\begin{equation}\label{C23}
 |{v^l}(t)|-|{v^l}(\tau )|\leq -d_l\left|\int_{|{x^l}(\tau )|+\tilde{M}}^{|{x^l}(t)|+\tilde{M}}\psi(s)\;ds\right| +\frac{c}{b}.
\end{equation}
In particular, from (\ref{C23}), we have

\begin{equation}\label{v02}
	|{v^l}(\tau )|+\frac c b\geq d_l\left|\int_{|{x^l}(\tau )|+\tilde{M}}^{|{x^l}(t)|+\tilde{M}}\psi(s)\;ds\right|.
\end{equation}

Since $\int_a^{+\infty}\psi(s)\;ds=+\infty$, this implies the existence of a constant  $x^l_M>0$ such that

$$|{v^l}(\tau )|+\frac{c}{b}=d_l\int_{|{x^l}(\tau )|+\tilde{M}}^{x_M^l}\psi(s)\;ds ,$$
which, along with (\ref{v02}), gives

$$|{x^l}(t)|+\tilde{M}\leq x_M^l,\quad\forall\  t\ge\tau\,,$$
since $\psi$ is a nonnegative function.

Using this in (\ref{vt3}), we have

$$\frac{d|{v^l}|}{dt}\leq -d_l\psi(x^l_M)|{v^l}(t)|+ce^{-bt},$$
and therefore, from the
Gronwall inequality we obtain,
\begin{equation}\label{C24}
\vert v^l(t)\vert \le Ce^{-B^lt}\,,
\end{equation}
for suitable positive constants $C, B^l.$

Then, from (\ref{C24}) and the induction hypothesis (\ref{inductionassumptiondelay1}), for every $j\in {\mathcal L}(l),$ we have

\begin{equation}
	|{v_l}(t)-{v_j}(t)|\le \vert v_l(t)-\hat v_l(t)\vert +
	\vert \hat v_l(t) -v_j(t)\vert =O(e^{-Bt}).
	\label{firstbound}
\end{equation}
Now, to complete the induction argument, the only thing left to prove is that, for all $t>0$ and $i,j\in\{1, \dots,l\}$,

\begin{equation}
|{v_i}(t-\tau)-{v_j}(t)|=O(e^{-Bt}),
\label{finalbound}
\end{equation}
for a suitable positive constant $B.$

If $i,j\in\{1, \dots, l-1\}$, then (\ref{finalbound}) is true by (\ref{inductionassumptiondelay2}). Consider the case $i\in\{1, \dots, l-1\}$ and $j=l$. Then,

$$
|{v_i}(t-\tau )-{v_l}(t)|
\leq
|{v_i}(t-\tau )-{v_i}(t)|+\vert {v_i}(t)-{v_l}(t)|=O(e^{-Bt}),$$
by (\ref{inductionassumptiondelay2}) and (\ref{firstbound}), for suitable $B.$

Consider now the case where $i=j=l$. Then, using the previous case we see that

\begin{equation}
\label{C30}
\begin{array}{l}
\displaystyle{
|{v_l}(t-\tau)-{v_l}(t)|=\left|\int_{t-\tau}^t {v_l}^\prime (s)\;ds\right|=\left|\int_{t-\tau}^t \sum_{k\in\mathcal{L}(l)}a_{lj}(s-\tau)\left({{v_k}(s-\tau)-{v_l}(s)}\right)\;ds\right|}\\
\hspace{2 cm}\displaystyle{
\leq \bar{c}\int_{t-\tau}^te^{-Bs}\;ds= \bar{c}\tau e^{-B(t-\tau)}=\bar{c}\tau e^{B\tau}e^{-B t}=O(e^{-Bt}).}
\end{array}
\end{equation}

Also for the last case, where $j\in\{1, \dots, l-1\}$ and $i=l$, using (\ref{C30}) we have

$$|{v_l}(t-\tau)-{v_j}(t)|\leq |{v_l}(t-\tau)-{v_l}(t)|+|{v_l}(t)-{v_j}(t)|=O(e^{-Bt}),$$
by the previous case and (\ref{firstbound}). With this, we can say that (\ref{finalbound}) is satisfied for all $i,j\in \{1,\dots,l\}$ and the theorem is proved.
\end{proof}

\begin{Remark}\label{serve}
{\rm
Note that the estimate on the diameter in velocity $V(t)$ implies
$$\sup_{t>0}\ X(t)<+\infty\,,$$
therefore Theorem
\ref{flockingtheoremHLdelay} ensures convergence to consensus in the sense of Definition \ref{cons}\,.
}
\end{Remark}
\section{Flocking under a free-will leader} \label{sect4}
\setcounter{equation}{0}

Here we analyze the case in which the ultimate leader agent of the HL-flock  may have a free-will acceleration, instead of moving with constant velocity as in the previous section.
This reflects natural situations in which the flock is approached, for instance, by a predator and then the  leader takes off first or it changes the velocity in order to avoid
any danger.

The model describing such a situation is

\begin{equation}\label{agent1}
\begin{array}{l}
\displaystyle{\frac{d {x_1}}{dt}(t)={v_1}(t),}\\
\displaystyle{
\frac{d {v_1}}{dt}(t)=f(t),}
\end{array}
\end{equation}
where $f:[0,+\infty)\rightarrow \RR^d$ is a continuous integrable function, that is,
\begin{equation}\label{acceleration}
\Vert f\Vert_1=\int_0^{+\infty} \vert f(t)\vert \, dt <+\infty\,,
\end{equation}
for the motion of the free-will leader, and the Cucker-Smale model under hierarchical leadership for the other agents,
 namely
\begin{equation}\label{CSShendelayFW}
\begin{array}{l}
\displaystyle{\frac{d {x_i}}{dt}(t)={v_i}(t),}\\
\displaystyle{
\frac{d {v_i}}{dt}(t)=\sum_{j\in\mathcal{L}(i)}a_{ij}(t-\tau )({v_j}(t-\tau)-{v_i}(t)),}
\end{array}
\end{equation}
for all $i\in\{2, \dots, N\}.$
As usual, in order to solve the delay problem we need to assign the initial data on the time interval $[-\tau , 0],$ i.e.
\begin{equation}\label{ICFW}
\begin{array}{l}
{x_i}(s)=x_i^0(s),\\
{v_i}(s)=v_i^0(s),
\end{array}
\end{equation}
for some continuous functions $x_i^0$ and $v_i^0,$ for $i=1,\dots ,N.$

\begin{Definition}
We define the {\bf depth} $\Gamma$ of the HL-flock $\{1,\dots , N\}$ as  the maximum number of information passages needed to reach every agent starting from the free--will leader, i.e.
  $$\Gamma = \max\ \{\ \# [\mathcal{L}](i)\  :\  i=1,\dots ,N\,\}.$$
\end{Definition}

 Our theorem below extends and generalizes the flocking result proved by Shen for the undelayed case (see Th. 5.1 of \cite{Shen}).

\begin{Theorem}\label{flockingtheoremHLdelayFW}
Let $(x_i, v_i),$ $i=1, \dots, N,$ be a solution of the Cucker-Smale system under hierarchical leadership with delay  $(\ref{agent1})$--$(\ref{CSShendelayFW})$ with initial conditions
$(\ref{ICFW}).$
Assume that $(\ref{divergenttail})$
is satisfied and that the acceleration of the free-will leader satisfies
\begin{equation}\label{suf}
\vert f(t)\vert =O((1+t)^{-\mu }),
\end{equation}
for some $\mu > \Gamma-1\,,$ where $\Gamma$ is the depth of the flock.
Then, the flocking estimate
\begin{equation}\label{flockingestimateFW}
V(t)=O((1+t)^{-(\mu-\Gamma+1)}),
\end{equation}
holds true.
\end{Theorem}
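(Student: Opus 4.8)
The plan is to mimic the induction scheme of Theorem \ref{flockingtheoremHLdelay}, but now the "base case" is the free-will leader itself (agent $1$), whose velocity is no longer constant: from \eqref{agent1} and \eqref{acceleration}, $v_1(t)=v_1(0)+\int_0^t f(s)\,ds$ converges to some $v_\infty$, and $|v_1(t)-v_\infty|\le\int_t^{+\infty}|f(s)|\,ds = O((1+t)^{-(\mu-1)})$ by \eqref{suf}. Thus for agent $1$ we get polynomial-rate (rather than exponential) decay of $|v_1(t)-v_\infty|$, and since $\mu>\Gamma-1\ge 0$ this is a genuine decay. Likewise $|v_1(t-\tau)-v_1(t)|=|\int_{t-\tau}^t f(s)\,ds|\le \int_{t-\tau}^{+\infty}|f(s)|\,ds = O((1+t)^{-(\mu-1)})$, which will serve as the analogue of the "$|v_i(t-\tau)-v_j(t)|$" bound that propagates through the induction. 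First I would set up the induction on the \emph{level} in the hierarchy rather than just on the number of agents: for an agent $l$ whose entire leader set $[\mathcal L](l)$ has been dealt with, I want to show that if every $j$ with $\#[\mathcal L](j)\le k$ satisfies $|v_j(t)-v_\infty| = O((1+t)^{-(\mu-k+1)})$ and $|v_j(t-\tau)-v_\infty| = O((1+t)^{-(\mu-k+1)})$, then every agent $l$ with $\#[\mathcal L](l)=k+1$ satisfies the same with exponent $\mu-k = \mu-(k+1)+1$. Taking $k=\Gamma$ at the top then yields \eqref{flockingestimateFW} after using $V(t)\le 2\max_i|v_i(t)-v_\infty|$.

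For the inductive step I would again introduce $\hat v_l = d_l^{-1}\sum_{i\in\mathcal L(l)}v_i$ and $v^l = v_l-\hat v_l$ as in \eqref{C16}--\eqref{C17}, derive the analogue of \eqref{vtdelayy},
\[
\frac{d v^l}{dt} = -\sum_{j\in\mathcal L(l)}a_{lj}(t-\tau)\,v^l(t) + R(t),\qquad t\ge\tau,
\]
where now the remainder $R(t)$ collects $\frac{d\hat v_l}{dt}$ and the term $\sum_j a_{lj}(t-\tau)(v_j(t-\tau)-\hat v_l(t))$; by the induction hypothesis (all agents in $\mathcal L(l)$ have $\#[\mathcal L](\cdot)\le k$) and the boundedness of $\psi$ and of the velocities (Proposition \ref{boundedness}), one gets $|R(t)| = O((1+t)^{-(\mu-k+1)})$ — note $\frac{d\hat v_l}{dt}$ is a sum of terms $a_{ij}(v_j(t-\tau)-v_i(t))$, each controlled by writing $v_j(t-\tau)-v_i(t) = (v_j(t-\tau)-v_\infty)-(v_i(t)-v_\infty)$. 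Then I would run exactly the Lyapunov-functional argument \eqref{Lyal}--\eqref{C24}: using $|x^l(t-\tau)|\le|x^l(t)|+\tau D_0$ and monotonicity of $\psi$, the functionals $\mathcal L^l_\pm(t)=|v^l(t)|\pm d_l\Phi(|x^l(t)|+\tilde M)$ satisfy $\frac{d}{dt}\mathcal L^l_\pm\le |R(t)|$, whose integral over $[\tau,\infty)$ is finite (here $\mu-k+1>0$ is needed, which holds since $k\le\Gamma$ forces $\mu-k+1\ge\mu-\Gamma+1>0$). Divergence of the tail \eqref{divergenttail} then bounds $|x^l(t)|$ uniformly, so $a_{lj}(t-\tau)\ge\psi(x^l_M)>0$, and the scalar inequality $\frac{d|v^l|}{dt}\le -d_l\psi(x^l_M)|v^l(t)| + |R(t)|$ combined with Gronwall gives $|v^l(t)| = O((1+t)^{-(\mu-k+1)})$ — the exponentially decaying homogeneous part is dominated by the polynomial forcing, so the rate is inherited from $R$, \emph{not} improved. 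Finally $|v_l(t)-v_\infty|\le|v^l(t)|+|\hat v_l(t)-v_\infty|$, and the latter is a convex combination of the $|v_i(t)-v_\infty|$'s; the worst exponent among the leaders is $\mu-k+1$ but the convolution-with-$\tau$ / integration steps (as in \eqref{C30}) may consume one more power, yielding the clean bookkeeping $\mu-k$, i.e. one power lost per level — which is exactly why $\Gamma-1$ powers are lost overall and why the hypothesis $\mu>\Gamma-1$ is sharp. The same estimate for $|v_l(t-\tau)-v_\infty|$ follows by writing $v_l(t-\tau)-v_l(t)=\int_{t-\tau}^t v_l'(s)\,ds$ and bounding the integrand via the already-established rates, exactly as in \eqref{C30}.

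The main obstacle, and the place requiring genuine care rather than routine copying, is the precise loss-of-one-power bookkeeping: one must verify that each passage through a hierarchy level degrades the exponent by at most $1$, so that an agent at depth $\Gamma$ still has strictly positive decay exponent $\mu-\Gamma+1>0$. Concretely, this means tracking where a factor $(1+t)$ is genuinely lost: it is lost when one integrates a decaying quantity over $[\tau,t]$ to compare $\hat v_l$ (an average of leaders' velocities) or $x^l(t-\tau)$ with their shifted/averaged counterparts, and when the Lyapunov inequality is integrated — but crucially it is \emph{not} lost at the Gronwall step, since there the polynomial forcing dominates the exponential kernel and the output rate equals the input rate. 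Getting all $O(\cdot)$ constants to be uniform in $t$ (they depend on $\|f\|_1$, $D_0$, $\psi(0)$, the $x^l_M$'s and $M_l$'s, and $\tau$) and checking the integrability threshold $\mu-\Gamma+1>0$ at the deepest level is the crux; once the exponent bookkeeping is pinned down, every individual estimate is a direct transcription of the corresponding step in the proof of Theorem \ref{flockingtheoremHLdelay} with "$e^{-bt}$" replaced by "$(1+t)^{-(\mu-k+1)}$".
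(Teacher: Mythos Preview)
Your proposal follows essentially the same route as the paper: induction along the hierarchy, the Lyapunov functionals $|v^l|\pm d_l\Phi(|x^l|+\text{const})$ to bound the relative positions via the divergent-tail hypothesis~\eqref{divergenttail}, and then Gronwall on the resulting scalar inequality to extract the decay rate. Your choice to induct on the depth $\#[\mathcal L](i)$ rather than on the agent index, and to use the limit $v_\infty$ as a common reference, are tidy variants that make the appearance of $\Gamma$ in the final exponent more transparent; the paper instead inducts on the agent index and works with pairwise differences $v_i-v_j$.

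Two small points to watch. First, Proposition~\ref{boundedness} as stated is for system~\eqref{CSShendelay}, where $v_1$ is constant, and its proof (via Proposition~\ref{positivityTh}) does not apply verbatim once agent~$1$ has free will; the paper instead obtains the needed uniform bound on $|v^l|$ directly from the scalar differential inequality for $|v^l|$, as in~\eqref{O4} and~\eqref{M2}, and you should do the same rather than invoke Proposition~\ref{boundedness}. Second, your bookkeeping locates the loss of one power per level at ``convolution-with-$\tau$ steps as in~\eqref{C30}'', but those steps do not cost a power: the integral over $[t-\tau,t]$ of an $O((1+s)^{-\alpha})$ quantity is still $O((1+t)^{-\alpha})$. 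In the paper the power is actually lost at the Gronwall step, through the crude estimate $\int_{T/2}^T e^{-c(T-t)}(1+t)^{-\alpha}\,dt \le \int_{T/2}^T (1+t)^{-\alpha}\,dt = O((1+T)^{-(\alpha-1)})$ (see~\eqref{O11} and the passage from~\eqref{M6} to~\eqref{K1}). Either accounting suffices for the stated estimate~\eqref{flockingestimateFW}.
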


\begin{Remark}\label{ultimo}
{\rm
In the statement of Theorem \ref{flockingtheoremHLdelayFW}, the
condition $\mu > \Gamma -1$ depends on the depth of the  flock.
This is natural because it corresponds to require less free will for the leader of a {\sl deep} flock, in order to have consensus.
Indeed, it may happen that the ultimate leader leads some agents only through a large number of intermediate agents, namely several information passages. So, the constraint on the decaying exponent of the leader's acceleration has to be appropriate for the size of the cloud in order to have the formation of a coherent flock.
}
\end{Remark}

\noindent
{\sl Proof of Theorem \ref{flockingtheoremHLdelayFW}.}
We argue by induction. First of all, look at the first agent, namely the free-will leader.
From (\ref{agent1}) we deduce
$$v_1(t)=v_1(0)+\int_0^t f(s)\, ds\,,$$
and so, being $f$ integrable,
\begin{equation}\label{O1}
\vert v_1(t)\vert \le\vert v_1(0)\vert +\Vert f\Vert_1=C_1\,,\quad \forall\ t\ge 0\,.
\end{equation}
Now, look at the 2-flock. As before, let us denote
$$v^2(t)=v_2(t)-v_1(t) \quad\mbox{\rm and }\quad x^2(t)=x_2(t)-x_1(t),\quad t\ge 0\,.$$
From (\ref{agent1}) and (\ref{CSShendelayFW})
\begin{equation}\label{O2}
\begin{array}{l}
\displaystyle{\frac {dv^2}{dt}=\frac {dv_2}{dt}-\frac {dv_1}{dt}
=a_{21}(t-\tau )(v_1(t-\tau )-v_2(t))-f(t)}\\
\hspace{1 cm}\displaystyle{
=a_{21}(t-\tau )(v_1(t)-v_2(t))-a_{21}(t-\tau )\int_{t-\tau }^t v_1^\prime (s)\, ds -f(t)}\\
\hspace{1 cm} \displaystyle{=-a_{21}(t-\tau )v^2(t)-a_{21}(t-\tau )\int_{t-\tau }^tf(s)\, ds -f(t)\,.
}
\end{array}
\end{equation}
Now, from (\ref{suf}), it is immediate to see that
\begin{equation}
\left\vert \max_{s\in [0,+\infty )} \psi (s)\int_{t-\tau }^t f(s)\, ds\right \vert +\vert f (t)\vert \le C(1+t)^{-\mu}\,, \quad t>0\,,
\end{equation}
for a suitable positive constant $C\,.$
Therefore, from (\ref{O2}) we deduce
\begin{equation}\label{O3}
\frac {d\vert v^2\vert }{dt}\le -a_{21}(t-\tau )\vert
v^2(t)\vert +C(1+t)^{-\mu}\le C(1+t)^{-\mu}\,,
\end{equation}
from which follows
\begin{equation}\label{O4}
\vert v^2(t)\vert \le \vert v^2(0)\vert +C\int_0^{+\infty} (1+t)^{-\mu}\, dt\le C_2,\quad \forall\ t\ge 0\,,
\end{equation}
for some constant $C_2>0.$
Since
$$x^2(t-\tau )=x^2(t)+\int_t^{t-\tau}v^2(s)\, ds\,,$$
from (\ref{O4}) we can then deduce
\begin{equation}\label{O5}
\vert x^2(t-\tau )\vert \le \vert x^2(t)\vert +\tau C_2\,.
\end{equation}
From (\ref{O5}) and the first inequality in (\ref{O3}), recalling that $\psi$ is non-increasing, we obtain
\begin{equation}\label{O6}
\frac {d\vert v^2\vert }{dt}\le -\psi (\vert x^2(t)\vert +\tau C_2 )\vert
v^2(t)\vert +C(1+t)^{-\mu}\,.
\end{equation}
Now, in order to find a bound for $\vert x^2(t)\vert ,$ we introduce the  functionals

\begin{equation}\label{Lyap2FW}
\mathcal{F}^2_{\pm}(t)=|{v^2(t)}|\pm\Phi(|{x^2}(t)|+
\tau C_2),
\end{equation}

where the function $\Phi$ is such that
$\Phi^\prime (r)=\psi(r),$ $r\in (0,+\infty ).$ From  (\ref{O6}), we obtain

\begin{equation}\label{O7}
\begin{array}{l}
\displaystyle{
\frac{d\mathcal{F}^2_{\pm}}{dt}=
\frac{d|{v^2(t)}|}{dt}\pm \psi(|{x^2(t)}|+\tau C_2 )\frac{d|{x^2(t)}|}{dt}
}\\
\hspace{1 cm}
\displaystyle{
\leq -\psi(|{x^2(t)}|+\tau C_2 )|{v^2(t)}|\pm \psi(|{x^2(t)}|+\tau C_2)\frac{d|{x^2(t)}|}{dt} + C(1+t)^{-\mu}}\\
\hspace{1.6 cm}\displaystyle{
=\psi(|{x^2(t)}|+\tau C_2 )\left(
\pm\frac{d|{x^2(t)}|}{dt} -|{v^2(t)}| \right )
+C(1+t)^{-\mu}}\\
\hspace{2.1 cm}\displaystyle{
\leq C(1+t)^{-\mu }}\,,\quad \quad t\ge 0\,,
\end{array}
\end{equation}
where we have used (\ref{C5}).
Hence,
$$\mathcal{F}^2_{\pm}(t)\leq \mathcal{F}^2_{\pm}(0 )+C\int_0^{+\infty} (1+t)^{-\mu }\, dt =\mathcal{F}^2_{\pm}(0 )+K ,$$
so

$$|{v^2}(t)|\pm \Phi(|{x^2}(t)|+\tau C_2)\leq
|{v^2}(0)|\pm \Phi(|{x^2}(0)|+\tau C_2) +K,$$
or, equivalently,

\begin{equation}\label{O8}
|{v^2}(t)|-|{v^2}(0 )|\leq \pm \Phi(|{x^2}(0 )|+\tau C_2 ) \mp \Phi (|{x^2}(t)|+\tau C_2 )+K \,.
\end{equation}

From (\ref{O8}) we then deduce,

$$|{v^2}(t)|-|{v^2}(0 )|\leq-\left|\int_{|{x^2}(0)|+\tau C_2}^{|{x^2}(t)|+\tau C_2}\psi(s)\;ds\right|+K\,.$$
In particular,

\begin{equation}\label{O9}
|{v^2}(0)|+K \geq \left|\int_{|{x^2}(\tau )|+\tau |{v^2}(\tau )|}^{|{x^2}(t)|+\tau |{v^2}(\tau )|}\psi(s)\;ds\right | \,.
\end{equation}

Since we have assumed that  $\psi$ is positive and satisfies (\ref{divergenttail}), then there must exist some $x_R^2\geq 0$ such that

\begin{equation}\label{O10}
|{x^2}(t )|+\tau C_2 \leq x^2_R,
\quad t\ge 0\,.
\end{equation}
Now, using (\ref{O6}) along with the fact that $\psi$ is non-increasing, we have

$$\frac{d|{v^2(t)}|}{dt}\leq -\psi(x^2_R)|{v^2}(t)|+ C(1+t)^{-\mu} ,\quad t\ge 0\,,$$
and so, for every $T>0,$ the Gronwall's lemma implies
\begin{equation}\label{O11}
\begin{array}{l}
\displaystyle{\vert v^2(T)\vert \le e^{-\psi(x^2_R)\frac T2}\vert v^2( T/2 )\vert +\int_{\frac T2}^Te^{-\psi(x^2_R)(T-t)}\frac C {(1+t)^\mu }\, dt}\\
\hspace {1 cm} \displaystyle{\le e^{-\psi(x^2_R)\frac T 2}C_2
+O((1+T)^{-(\mu -1)})\,.}
\end{array}
\end{equation}
Thus,
\begin{equation}\label{O12}
\vert v_2(t)-v_1(t)\vert =O((1+t)^{-(\mu -1)})\,.
\end{equation}
Note also that
\begin{equation}\label{O13}
\vert v_1(t-\tau) -v_1(t)\vert \le \int_{t-\tau}^{+\infty} \vert f (t)\vert\, dt =O((1+t)^{-(\mu -1)})\,,
\end{equation}
and
\begin{equation}\label{O14}
\begin{array}{l}
\displaystyle{
\vert v_2(t-\tau) -v_2(t)\vert
\le \vert v_2(t-\tau ) -v_1(t-\tau )\vert }\\
\hspace{1.8 cm}\displaystyle{
+
\vert v_1(t-\tau ) -v_1(t)\vert +
\vert v_1(t) -v_2(t)\vert =O((1+t)^{-(\mu -1)})}\,.
\end{array}
\end{equation}
Therefore, (\ref{O12})--(\ref{O14}) imply
\begin{equation}\label{2flockFW}
\vert v_i(t-\tau) -v_j(t)\vert = O((1+t)^{-(\mu -1)}),\quad \mbox{\rm for}\ \ i,j\in \{1, 2\}\,.
\end{equation}

Now, as induction hypothesis, assume that
for a flock of
 $l-1$ agents $\{1, \dots, l-1\}$ with $2<l\le N$, we have
\begin{align}
	&|{v_i}(t)-{v_j}(t)|=O((1+t)^{-(\mu -l+2)}),\label{inductionassumptiondelay1FW}\\
	&|{v_i}(t-\tau)-{v_j}(t)|=O((1+t)^{-(\mu -l+2)})\,,
	\label{inductionassumptiondelay2FW}
\end{align}
for all $i,j\in \{ 1,\dots, l-1\}.$

 Then, we want to prove  the same kind of estimates  for a $l$ flock with $l$ agents.
This will prove our theorem.

As before, we use the average position and velocity of the leaders of agent $l$, introduced in (\ref{C16}) and let $x^l, v^l$  as in  (\ref{C17}).
Then,
as before we can write

\begin{equation}\label{rinomino}	
\frac{d {v^l}}{dt}=\sum_{j\in\mathcal{L}(l)}a_{lj}(t-\tau)({\hat{v_l}}(t)-{v_l}(t))+\sum_{j\in\mathcal{L}(l)}a_{lj}(t-\tau)({v_j}(t-\tau)-{\hat{v_l}}(t))-\frac{d {\hat{v_l}}}{dt}.
\end{equation}
Using the induction hypothesis (\ref{inductionassumptiondelay2FW}), since $\mathcal{L}(i),\mathcal{L}(l)\subseteq \{1, \dots, l-1\}$,

\begin{equation}\label{epsilon1delayFW}
	\frac{d {\hat{v_l}}}{dt}=\frac{1}{d_l}\sum_{i\in\mathcal{L}(l)}\frac{d {v_i}}{dt}=\chi_{1\in\mathcal{L}(l)}\frac{1}{d_l}{f}(t)+\frac{1}{d_l}\sum_{i\in\mathcal{L}(l)\backslash \{1\}}\frac{d {v}_i}{dt}=O((1+t)^{-(\mu -l+2)}).
\end{equation}
Using again the induction hypotheses  (\ref{inductionassumptiondelay2FW}),

\begin{equation}\label{O19}
\begin{array}{l}
\displaystyle{
	\sum_{j\in\mathcal{L}(l)}a_{lj}(t-\tau)({v_j}(t-\tau)-{\hat{v_l}}(t))}
\\ \hspace{1 cm}	
	\displaystyle{
	= \frac 1 {d_l}\sum_{j\in\mathcal{L}(l)}a_{lj}(t-\tau)
	\sum_{i\in\mathcal{L}(l)} (v_j(t-\tau) -v_i(t))
	=O((1+t)^{-(\mu -l+2)}).}
	\end{array}
\end{equation}

Therefore, (\ref{rinomino}) becomes

\begin{equation} \label{vtdelayyFW}
	\frac{d {v^l}}{dt}=-\sum_{j\in\mathcal{L}(l)}a_{lj}(t-\tau) {v^l}(t)+ O((1+t)^{-(\mu -l+2)}),
\quad t\ge 0\,.	
\end{equation}
with
$$a_{lj}(t-\tau)=\psi (\vert x_i(t-\tau )-x_l(t-\tau )\vert).$$
Then, arguing as in the proof of Theorem \ref{flockingtheoremHLdelay},
we arrive at

\begin{equation}\label{dvdelay2FW}
	\frac{d|{v^l}|}{dt}\leq -d_l \psi\left(|{x^l}(t-\tau)|+R_l\right)|{v^l}(t)|+ O((1+t)^{-(\mu -l+2)}),\quad t\ge 0\,,
\end{equation}
for a suitable positive constant $R_l\,.$

Note that (\ref{dvdelay2FW}) implies

\begin{equation}\label{M1}
\frac{d|{v^l}|}{dt}\leq C (1+t)^{-(\mu -l+2)},
\end{equation}
for a suitable constant $C.$ Then, from (\ref{M1}) we deduce

\begin{equation}\label{M2}
\vert v^l(t)\vert \le \vert v^l(0)\vert +C\int_0^{+\infty }
(1+t)^{-(\mu -l+2)}\, dt \le C_l,
\end{equation}
where we used that, since $\mu >N-1$ and $l\le N,$ the integral is convergent.

Then,
\begin{equation}\label{M3}
|{x^l}(t-\tau)|\leq |{x^l}(t)|+\int_{t-\tau }^t \vert v^l(s)\vert \, ds \le |{x^l}(t)|+ C_l\tau\,,
\quad t\ge 0\,,
\end{equation}
which, used in (\ref{dvdelay2FW}), yields

\begin{equation}\label{M4}
	\frac{d|{v^l}|}{dt}\leq -d_l \psi\left(|{x^l}(t)|+R_l+C_l\tau \right)|{v^l}(t)|+C (1+t)^{-(\mu -l+2)}.
\end{equation}

Now consider the functionals
\begin{equation}\label{LyalFW}
\mathcal{F}^l_{\pm}(t)=
|{v^l}(t)|\pm d_l\Phi\Big(|{x^l}(t)|+R_l+\tau C_l\Big),
\end{equation}
with, as before, $\Phi$  primitive function of $\psi\,.$
Using (\ref{M4}) we then obtain
\begin{equation}\label{M5}
\frac{d\mathcal{F}^l_{\pm}}{dt}\le C (1+t)^{-(\mu -l+2)}\,.
\end{equation}
Since the function in the right--hand side of (\ref{M5}) is integrable, we can prove that there exists a positive constant $x_R^l$ such that
$$\vert x^l(t)\vert +R_l+\tau C_l \le x_R^l,\quad t\ge 0\,.$$
We can thus restate (\ref{M4})
as
\begin{equation}\label{M6}
\frac{d|{v^l}|}{dt}\leq -d_l \psi\left( x^l_R\right )|{v^l}(t)|+C (1+t)^{-(\mu -l+2)}.
\end{equation}
Now we can apply the Gronwall's lemma as for the $2-$flock
case obtaining
\begin{equation}\label{K1}
\vert v^l(t)\vert =O ((1+t)^{-(\mu-l+1)})\,.
\end{equation}
Then, from (\ref{K1}) and the induction hypothesis (\ref{inductionassumptiondelay1FW}), for every $j\in {\mathcal L}(l),$ we have

\begin{equation}
	|{v_l}(t)-{v_j}(t)|\le \vert v_l(t)-\hat v_l(t)\vert +
	\vert \hat v_l(t) -v_j(t)\vert =O ((1+t)^{-(\mu-l+1)}).
	\label{firstboundFW}
\end{equation}
Now, to complete the induction argument, we only have to prove  that, for all $i,j\in\{1, \dots,l\}$,

\begin{equation}
|{v_i}(t-\tau)-{v_j}(t)|=O ((1+t)^{-(\mu-l+1)}).
\label{finalboundFW}
\end{equation}

If $i,j\in\{1, \dots, l-1\}$, then (\ref{finalboundFW}) is true by (\ref{inductionassumptiondelay2FW}). Consider the case $i\in\{1, \dots, l-1\}$ and $j=l$. Then,

$$
|{v_i}(t-\tau )-{v_l}(t)|
\leq
|{v_i}(t-\tau )-{v_i}(t)|+\vert {v_i}(t)-{v_l}(t)|=O ((1+t)^{-(\mu-l+1)}),$$
by (\ref{inductionassumptiondelay2FW}) and (\ref{firstboundFW}).

Consider now $i=j=l$. Then, using the previous case we see that

\begin{equation}
\label{C30FW}
\begin{array}{l}
\displaystyle{
|{v_l}(t-\tau)-{v_l}(t)|=\left|\int_{t-\tau}^t {v_l}^\prime (s)\;ds\right|=\left|\int_{t-\tau}^t \sum_{k\in\mathcal{L}(l)}a_{lk}(s-\tau)\left({{v_k}(s-\tau)-{v_l}(s)}\right)\;ds\right|}\\
\hspace{1.2 cm}\displaystyle{
\leq C\int_{t-\tau}^t(1+s)^{-(\mu -l+1)}\;ds\le C\tau (1+ t-\tau)^{-(\mu -l+1)}=O ((1+t)^{-(\mu-l+1)})\,.}
\end{array}
\end{equation}
Also for the last case, where $j\in\{1, \dots, l-1\}$ and $i=l$, using (\ref{C30FW}) we have

$$|{v_l}(t-\tau)-{v_j}(t)|\leq |{v_l}(t-\tau)-{v_l}(t)|+|{v_l}(t)-{v_j}(t)|=O ((1+t)^{-(\mu-l+1)}),$$
where we have used (\ref{firstboundFW}). With this, we can conclude  that (\ref{finalboundFW}) is satisfied for all $i,j\in \{1,\dots,l\}$ and the theorem is proved.\qed

\begin{Remark}\label{FWmigliora}
{\rm
The flocking result for the Cucker-Smale model under hierarchical leadership and with a free-will leader has been first obtained by Shen \cite{Shen}, under the same assumption (\ref{suf}) on the acceleration of the ultimate leader.
We extend her result by including delay effects. Moreover, we deal with a more general potential of interaction $\psi$ with respect to \cite{Shen}
where, indeed, the author deals with the potential of the former papers of Cucker and Smale (\cite{CS1, CS2}), $\psi (s)= \frac {H}{(1+s^2)^\beta},$ under the assumption $\beta <\frac 1 2\,.$
}
\end{Remark}

\noindent {\bf Acknowledgements.}
This work has been completed when the second author was at DISIM, University of L'Aquila, for the MathMods master program. The research of the first author is partially supported by the GNAMPA 2017 
project {\em
Comportamento asintotico e controllo di equazioni di evoluzione non lineari} (INdAM).
We would like to thank DISIM, MathMods  and GNAMPA-INdAM for the support.


\begin{thebibliography}{10}

\bibitem{Albi}
G.~Albi, M.~Herty, and L.~Pareschi.
\newblock Kinetic description of optimal control problems and applications to opinion consensus.
\newblock {\em Commun.  Math. Sci.,} 13:1407--1429, 2015.

\bibitem{Axe}
R.~Axelrod.
\newblock {\em The Evolution of Cooperation}, Basic Books, New York, 1984.

\bibitem{Aydogdu}
A.~Aydogdu, M.~Caponigro, S.~McQuade, B.~Piccoli,
N.~Pouradier Duteil, F.~ Rossi and E.~Tr\'elat.
\newblock Interaction network, state space and control in social dynamics.
\newblock In: Active Particles -- Volume 1 -- Theory, Models, Applications, edited by N. Bellomo, P. Degond, E. Tadmor, Modeling and Simulation in Science Engineering and Technology, Birkh\"auser-Springer, Boston, 2016.


\bibitem{Bellomo}
N.~Bellomo, M.~A.~Herrero and A.~Tosin.
\newblock On the dynamics of social conflict: Looking for
the Black Swan.
\newblock {\em Kinet. Relat. Models}, 6:459--479, 2013.

\bibitem{BD}
A.~Blanchet and P.~Degond.
\newblock Topological interactions in a Boltzmann-type framework.
\newblock {\em J. Stat. Phys.}, 163:41--60, 2016.

\bibitem{Borzi}
A.~Borz\`i and S.~Wongkaew.
\newblock Modeling and control through leadership of a refined flocking system.
\newblock {\em Math. Models Methods Appl. Sci.}, 25:255--282, 2015.



\bibitem{Bullo}
F.~Bullo, J.~Cort\'es, and S.~Mart\'inez.
\newblock {\em Distributed control of robotic networks: a mathematical approach to motion coordination algorithms.}
\newblock Princeton series in applied mathematics. Princeton University Press, Princeton, 2009.


\bibitem{Camazine}
S.~Camazine, J.L.~Deneubourg, N.R.~Franks, J.~Sneyd,
G.~Theraulaz and E.~Bonabeau.
\newblock{\em
Self-Organization in Biological Systems}, Princeton University Press, Princeton, NJ, 2001.


\bibitem{canuto}
C. Canuto, F. Fagnani and P. Tilli.
\newblock An Eulerian approach to the analysis of Krause's consensus models.
\newblock {\em SIAM J. Control Optim.},50:243--265, 2012.

\bibitem{Caponigro1}
M.~Caponigro,~M. Fornasier, B.~Piccoli and E.~Tr\'elat.
\newblock { Sparse stabilization and optimal control of the Cucker-Smale model}.
\newblock {\em Math. Cont. Related Fields,} 3:447--466, 2013.

\bibitem{Caponigro2}
M. Caponigro, M. Fornasier, B. Piccoli and E. Tr\'elat.
\newblock { Sparse stabilization and control of alignment models}.
\newblock {\em Math. Models Methods Appl. Sci.,} 25:521--564, 2015.



\bibitem{carfor}
J.A. Carrillo, M. Fornasier, J. Rosado and G. Toscani.
\newblock { Asymptotic flocking dynamics for the kinetic Cucker-Smale model}.
\newblock {\em SIAM J. Math. Anal.}, 42:218--236, 2010.



\bibitem{CFTV}
J. A.~Carrillo, M.~ Fornasier, G.~Toscani and F.~Vecil.
\newblock Mathematical Modeling of Collective Behavior in Socio-Economic and Life Sciences.
\newblock Chapter Particle, kinetic, and hydrodynamic
models of swarming, pages 297--336.
\newblock Birkh\"{a}user Boston, Boston, 2010.



\bibitem{Choi}
Y.P.~Choi and J.~Haskovec.
\newblock Cucker-Smale model with normalized communication weights and time delay.
\newblock {\em Kinet. Relat. Models}, 10:1011--1033, 2017.

\bibitem{Couzin}
I.~Couzin, J.~Krause, N.~Franks and S.~Levin.
\newblock  Effective leadership and decision making in
animal groups on the move.
\newblock {\em Nature}, 433:513--516, 2005.

\bibitem{Cristiani}
E.~Cristiani, B.~Piccoli and A.~Tosin.
\newblock Multiscale modeling of granular flows with application
to crowd dynamics.
\newblock {\em Multiscale Model. Simul.}, 9:155--182, 2011.

\bibitem{CuckerDong}
F..~Cucker and J.G..~Dong.
\newblock A general collision-avoiding flocking framework.
\newblock {\em IEEE Trans. Automat. Cont.},
56:1124--1129, 2011.

\bibitem{CuckerMordecki}
F.~Cucker and E.~Mordecki.
\newblock Flocking in noisy environments.
\newblock {\em J. Math. Pures Appl.}, 89:278--296, 2008.


\bibitem{CS1}
F.~Cucker  and S.~Smale.
\newblock Emergent behaviour in flocks.
\newblock {\em IEEE Transactions on Automatic Control}, 52:852--862, 2007.



\bibitem{CS2}
F.~Cucker  and S.~Smale.
\newblock
On the mathematics of emergence.
\newblock {\em Japanese Journal of Mathematics}, 2:197--227, 2007.

\bibitem{Da}
F.~Dalmao and E.~Mordecki.
\newblock
Cucker-Smale Flocking under Hierarchical Leadership and Random Interactions.
\newblock {\em SIAM J. Appl. Math.}, 71:1307--1316, 2011.

\bibitem{degond}
P.~Degond and S.~Motsch.
\newblock {Continuum limit of self-driven particles with orientation interaction}.
\newblock {\em Math. Models Methods Appl. Sci.}, 18:1193--1215, 2008.

\bibitem{Delay2}
R.~Erban, J.~Haskovec and Y.~Sun.
\newblock
On Cucker-Smale model with noise and delay.
\newblock {\em SIAM J. Appl. Math.}, 76(4):1535--1557, 2016.

\bibitem{HaHaKim}
S.Y.~Ha, T.~Ha and J.H.~Kim.
\newblock {Emergent behavior of a Cucker-Smale type particle model with nonlinear velocity couplings},
\newblock {\em IEEE Trans. Automat. Control,}  55:1679--1683, 2010.

\bibitem{HaLee}
S.Y.~Ha, K.~Lee and D.~Levy.
\newblock Emergence of time-asymptotic flocking in a stochastic Cucker-Smale system.
\newblock {\em Commun. Math. Sci.,} 7:453--469, 2009.

\bibitem{HL}
S.Y.~Ha and J.G.~Liu.
\newblock A simple proof of the Cucker-Smale flocking dynamics and mean-field limit.
\newblock {\em Commun. Math. Sci.}, 7:297--325, 2009.

\bibitem{HaSlemrod}
S.Y.~Ha and M.A.~Slemrod.
\newblock Flocking Dynamics of Singularly Perturbed Oscillator Chain and the Cucker-Smale System.
\newblock {\em J. Dyn. Diff. Equat.}, 22:325--330, 2010.

\bibitem{HT}
S.Y.~Ha and E.~Tadmor.
\newblock From particle to kinetic and hydrodynamic descriptions of flocking.
\newblock {\em Kinet. Relat. Models}, 1:415--435, 2008.

\bibitem{H}
J.~Haskovec.
\newblock Flocking dynamics and mean-field limit in the Cucker-Smale-type model with topological interactions.
\newblock {\em Phys. D}, 261: 42--51, 2013.


\bibitem{Hel}
D.~Helbing.
\newblock{\em Quantitative Sociodynamics: Stochastic Methods and Models of Social Interaction
Processes}, Springer-Verlag, New York, 2010.


\bibitem{Lemercier}
S.~Lemercier, A.~Jelic, R.~Kulpa, J.~Hua, J.~Fehrenbach,
P.~Degond, C.~Appert
Rolland, S.~Donikian and J. Pettr\'{e}.
\newblock Realistic following behaviors for crowd simulation.
\newblock {\em
Comput. Graph. Forum}, 31:489--498, 2012.

\bibitem{Li}
Z.~Li.
\newblock Effectual leadershipin flocks with hierarchy and individual preference.
\newblock {\em Discrete Contin. Dyn. Syst.}, 34:3683--3702, 2014.

\bibitem{LHX}
Z.~Li, S.Y.~Ha and X.~Xue.
\newblock Emergent phenomena in an ensemble of Cucker-Smale particles under joint rooted leadership.
\newblock{\em Math. Models Methods
Appl. Sci.}, 24: 1389--1419, 2014.

\bibitem{LX}
Z.~Li and X.~Xue.
\newblock Cucker-Smale Flocking under Rooted Leadership with Fixed and Switching Topologies.
\newblock {\em SIAM J. Appl. Math.}, 70:3156--3174, 2010.

\bibitem{LiY}
C.H.~Li and S.Y.~Yang.
\newblock A new discrete Cucker-Smale flocking model under hierarchical leadership.
\newblock {\em Discrete Contin. Dyn. Syst. Ser. B}, 21:2587--2599, 2016.

\bibitem{Delay1}
Y.~Liu and J.~Wu.
\newblock Flocking and asymptotic velocity of the Cucker-Smale model with processing delay.
\newblock {\em  J. Math. Anal. Appl.}, 415:53--61, 2014.

\bibitem{Mech}
N.~Mecholsky, E.~Ott and T.~M.~Antonsen.
\newblock  Obstacle and predator avoidance in a model for flocking.
\newblock {\em Phys. D}, 239:988--996, 2010.

\bibitem{MY}
P.~Milewski and X.~Yang.
\newblock
A simple model for biological aggregation with asymmetric sensing.
\newblock {\em Commun. Math. Sci.}, 6:397--416, 2008.


\bibitem{MT}
S.~Motsch and E.~Tadmor.
\newblock
A new model for self--organized dynamics and its flocking behavior.
\newblock {\em J. Stat. Phys.}, 144:923--947, 2011.

\bibitem{MT_SIREV}
S.~Motsch and E.~Tadmor.
\newblock Heterophilious Dynamics Enhances Consensus.
\newblock {\em SIAM Rev.} 56:577--621, 2014.


\bibitem{Parrish}
J.~Parrish and L.~Edelstein Keshet.
\newblock Complexity, pattern, and evolutionary trade-offs in
animal aggregation.
\newblock {\em Science}, 294:99--101, 1999.

\bibitem{Perea}
L.~Perea, P.~Elosegui and G.~G\'omez.
\newblock Extension of the Cucker-Smale control law to space flight formations.
\newblock {\em Journal of Guidance, Control, and Dynamics,} 32:527--537, 2009.


\bibitem{PRT}
B.~Piccoli, F.~Rossi and E.~Tr\'{e}lat.
\newblock Control to flocking of the kinetic Cucker-Smale model.
\newblock{\em
SIAM J. Math. Anal.,}  47:4685--4719, 2015.


\bibitem{PT}
C.~Pignotti and E.~Tr\'{e}lat.
\newblock {Convergence to consensus of the general finite-dimensional Cucker-Smale model with time-varying delays}.
\newblock {\em Preprint 2017, ArXiv:1707.05020.}

\bibitem{RX}
L.~Ru and X.~Xue.
\newblock {Multi-cluster flocking behavior of the hierarchical Cucker-Smale model}
\newblock {\em J. Franklin Inst.}, 354:2371--2392, 2017.

\bibitem{Shen}
J.~Shen.
\newblock {Cucker-Smale flocking under hierarchical leadership.}
\newblock {\em SIAM J. Appl. Math.}, 68:694--719, 2007/08.

\bibitem{Toner}
J. Toner and Y. Tu.
\newblock {Long-range order in a two-dimensional dynamical xy model: How
birds fly together}.
\newblock {\em Phys. Rev. Lett.}, 75:4326--4329, 1995.

\bibitem{TB}
C.M.~Topaz and A.L.~Bertozzi.
\newblock {Swarming patterns in a two-dimensional kinematic model for biological groups.}
\newblock {\em SIAM J. Appl. Math.}, 65:152--174, 2004.

\bibitem{Toscani}
G.~Toscani.
\newblock Kinetic models of opinion formation.
\newblock {\em Commun. Math. Sci.,} 4:481--496, 09 2006.


 \bibitem{Vicsek}
T.~Vicsek, A.~Czirok, E.~ Ben Jacob, I.~ Cohen and O.~Shochet.
\newblock  Novel type of phase
transition in a system of self-driven particles.
\newblock {\em Phys. Rev. Lett.}, 75:1226--1229, 1995.

\bibitem{Wongkaew}
S.~Wongkaew, M.~Caponigro and A.~Borz\`i.
\newblock On the control through leadership of the Hegselmann-Krause opinion formation model.
\newblock {\em Math. Models Methods Appl. Sci.,} 25:565--585, 2015.

\bibitem{Yates}
C.~Yates, R.~Erban, C.~Escudero, L.~Couzin, J.~Buhl, L.~Kevrekidis, P.~Maini and
D.~Sumpter.
\newblock Inherent noise can facilitate coherence in collective swarm motion.
\newblock {\em Proc.
Natl. Acad. Sci. USA}, 106:5464--5469, 2009.

\end{thebibliography}
\end{document}